\font\sc=rsfs10 at 12pt
\numberwithin{equation}{section}
\renewcommand{\a}{\alpha}
\renewcommand{\b}{\beta}
\newcommand{\g}{\gamma}
\newcommand{\G}{\Gamma}
\renewcommand{\d}{\delta}
\newcommand{\D}{\Delta}
\newcommand{\ve}{\varepsilon}
\renewcommand{\th}{\theta}
\renewcommand{\l}{\lambda}
\newcommand{\m}{\mu}
\newcommand{\n}{\nu}
\newcommand{\x}{\xi}
\renewcommand{\r}{\rho}
\renewcommand{\t}{\tau}
\newcommand{\f}{\phi}
\newcommand{\h}{\chi}
\renewcommand{\o}{\omega}
\renewcommand{\O}{\Omega}
\newcommand{\C}{{\mathbb C}}
\newcommand{\R}{{\mathbb R}}
\newcommand{\N}{{\mathbb N}}
\newcommand{\mbf}[1]{\protect\mbox{\boldmath$#1$\unboldmath}}
\newcommand{\Ab}{{\mathbf A}}
\newcommand{\Ac}{{\mathcal A}}
\newcommand{\Bc}{{\mathcal B}}
\newcommand{\Dc}{{\mathcal D}}
\newcommand{\Fc}{{\mathcal F}}
\newcommand{\Oc}{{\mathcal O}}
\newcommand{\pdf}[2]{\frac{\partial {#1}}{\partial {#2}}}
\DeclareMathOperator{\im}{{\rm Im}\,}
\DeclareMathOperator{\re}{{\rm Re}\,}
\newcommand{\Tr}{\operatorname{Tr\,}}
\newcommand{\tr}{\operatorname{tr\,}}
\newcommand{\supp}{\operatorname{supp\,}}
\newcommand{\Res}{\operatorname{Res\,}}
\newcommand{\opc}[1]{{\rm Op}[{#1}]}
\newcommand{\spec}{\operatorname{spec\,}}
\newcommand{\smac}{\operatorname{spec}_{\operatorname{ac}}}
\newcommand{\sme}{\operatorname{spec}_{\operatorname{ess}}}
\newcommand{\ssa}{\operatorname{sing}\operatorname{supp}_{\rm a}}
\newcommand{\wfa}{\operatorname{WF}_{\operatorname{a}}}
\newcommand{\ham}[1]{\mathbb{#1}} 
\newcommand{\cs}{C^{\infty}} 
\newcommand{\ccs}{C_{0}^{\infty}} 
\newcommand{\ddist}{\sc\mbox{D}^{\hspace{2.0pt} \prime}\hspace{1.0pt}} 
\newcommand{\dtemp}{\sc\mbox{S}^{\hspace{2.0pt} \prime}\hspace{1.0pt}} 
\newcommand{\edist}{\sc\mbox{E}^{\hspace{2.0pt} \prime}\hspace{1.0pt}} 
\newcommand{\tS}{\sc\mbox{S}\hspace{1.0pt}} 
\newcommand{\Fr}{\sc\mbox{F}\hspace{0.1pt}} 
\newtheorem{theorem}{Theorem}[section]
\newtheorem{proposition}[theorem]{Proposition}
\newtheorem{lemma}[theorem]{Lemma}
\theoremstyle{definition}
\newtheorem{definition}[theorem]{Definition}
\theoremstyle{remark}
\newenvironment{myindentpar}[1]%
 {\begin{list}{}%
         {\setlength{\leftmargin}{#1}}%
         \item[]%
 }
 {\end{list}}
\author{J. Kungsman \\ 
             Department of Mathematics \\
             Uppsala University \\
             SE-751 06 Uppsala, Sweden
             \and 
             Michael Melgaard \\
             Department of Mathematics \\ 
             University of Sussex \\ 
             Brighton BN1 9QH, Great Britain}
\title{Existence of Dirac resonances in the semi-classical limit}
\date{January 14, 2014}
\begin{document}
\maketitle 
\begin{abstract}
We study the existence of quantum resonances of the three-dimensional semiclassical Dirac operator perturbed by smooth, bounded and 
real-valued scalar potentials $V$ decaying like $\langle x \rangle ^{-\d }$ at infinity for some $\d >0$. By studying analytic singularities of a certain 
distribution related to $V$ and by combining two trace formulas, we prove that the perturbed Dirac operators possess resonances near 
$\sup V + 1$ and $\inf V -1$.  We also provide a lower bound for the number of resonances near these points expressed in terms of the semiclassical parameter.     
\end{abstract}
\section{Introduction}
\label{jkmm13aintroduction}
Several approaches have been pursued in the theory of resonances for (nonrelativistic) Schr\"{o}dinger operators, in particular
analytic dilation \cite{aguilar_combes,balslev_combes}, analytic distortion \cite{hunziker} and, in the semiclassical approximation, the one developed by 
Helffer and Sj\"{o}strand \cite{helsjo86}.  When one can simultaneously apply them to an operator, it turns out that the different definitions 
give the same resonances, as demonstrated by Martinez and Helffer \cite{helmar87}.  We refer to  Harrell \cite{harrell07} and Hislop \cite{hislop12} for 
recent surveys. 

Of particular interest for this paper we mention that, for semiclassical Schr\"{o}dinger operators $-\hbar ^2 \D + V(x)$ it was shown by 
Sj\"{o}strand  \cite{sjostrand97_2}, using his local trace formula \cite{sjostrand97},  that analytic singularities of certain distributions defined in terms of 
$V$ produce many resonances near any point of the analytic singular support of the afore-mentioned distributions. These results are also discussed in 
\cite{sjostrand01a,sjostrand01b}. Later,  Nedelec \cite{nedelec01} carried over these results to Schr\"odinger operators with 
matrix-valued potentials by establishing a local trace formula similar to Sj\"{o}strand's.

Resonances for three-dimensional Dirac operators were first studied rigorously by Weder \cite{weder73} and Seba \cite{seba} who, in the spirit of 
Aguilar-Balslev-Combes-Simon theory, applied the method of complex dilation to the Dirac operator. Utilizing the above-mentioned approach by 
Helffer and Sj\"{o}strand, Parisse \cite{parisse91,parisse92} studied resonances in the semiclassical limit, proving the existence of shape resonances, 
located exponentially near the real axis, and establishing the asymptotic behavior of the imaginary part of the first resonance in the case when the potential 
well is localized and non-degenerate.  Amour, Brummelhuis and Nourrigat \cite{amour_brummelhuis_nourrigat_01} proved the existence of resonances 
in the non-relativistic limit and for potentials that behave like a positive power of $|x|$ at infinity.  Khochman \cite{khochman07} studied Dirac operators with 
smooth matrix-valued potentials having an analytic extension in a sector of $\C^{3}$ around $\R^{3}$ outside a compact set and power decay in this sector. 
Within the semiclassical regime and using the complex distortion approach to resonances, he gave an upper bound for the number of resonances in 
complex domains of a certain type and he also established a representation formula for the derivative of the spectral shift function for Dirac operators, 
with potentials of more than cubic decay, in terms of their resonances. In particular, he proved a local trace formula for the perturbed Dirac operator, 
analogous to Sj\"{o}strand's formula valid within the nonrelativistic setting.

We are, however, not aware of any work that deal with the existence of Dirac resonances in the semiclassical limit for more general decaying 
potentials in the spirit of Sj\"{o}strand \cite{sjostrand97_2} and Nedelec \cite{nedelec01}. In the present paper we show that a Dirac operator perturbed 
by a non-zero electric  (scalar) potential $V = vI_4$ with $v \in \cs (\R ^3)$ decaying as $C\langle x \rangle ^{-\d }$ for some constant $C$ and $\d >0$ 
possesses resonances near $\sup v(x) + 1$ and $\inf v(x)-1$ and, furthermore, we establish a lower bound on the number of resonances. 

The main outline of the proof is similar to those in \cite{sjostrand97_2} and \cite{nedelec01} which are based on combining the local trace formula for resonances in the spirit of Sj\"ostrand (see \cite{sjostrand97_2}) and a trace formula of Robert (see \cite{robert_trace}). These trace formulas have been adapted to the Dirac operator in \cite{khochman07} and \cite{bruneau_robert99}, respectively. For a pair of Dirac operators $\ham{D}_0 + v_j(x)I_4 = \ham{D}_j= \mbf{d}_j(x, \hbar D)$, $j=1,2$, it turns out that points in the analytic singular support of a distribution $\o$ 
(in symbols, $\operatorname{sing} \operatorname{supp}_{\rm a} (\o )$) given by 
\begin{align*}
	&\langle \o , \f \rangle _{\Dc ', \Dc } \\
	&= \int \limits _{\R ^6} \Big (\f (\l _{+,2} (x, \x )) - \f (\l _{+,1}(x, \x )) + \f (\l _{-,2} (x, \x )) - \f (\l _	{-,1}(x,\x )) \Big ) \, dx \, d\x ,
\end{align*}   
where the $\l _{\pm , j}$ are the eigenvalues of the symbols $\mbf{d}_j$, generate resonances in their vicinity (see Section \ref{jkmm13aresonances_near_analytic_sing}). To better understand what points belong to $\operatorname{sing} \operatorname{supp}_{\rm a} (\o )$ we prefer to describe $\o $ directly in terms of the potentials $v_j$ and in Section \ref{jkmm13aresonances_near_analytic_sing} it is shown that there is a function $\f$ such that $\o = \pm \f \ast \mu $ where 
$$
	\mu (E) = \frac{d}{dE} \Big ( \int \limits _{v_1(x)\ge E}\,dx - \int \limits _{v_2(x)\ge E}\,dx \Big )
$$
for $E>0$. The difficulty lies in relating points of $\operatorname{sing} \operatorname{supp}_{\rm a} (\mu )$ to those of $\operatorname{sing} \operatorname{supp}_{\rm a} (\o )$ and it turns out that by the properties of $\f $ it is convenient to decompose $\o $ into two parts and exploit the theory of analytic pseudodifferential operators.   
\section{Notation and assumptions}
\label{jkmm13anotation}
The free, or unperturbed, semiclassical Dirac operator is the self-adjoint Friedrichs extension of the symmetric operator 
$$
\ham{D}_0 = -i\hbar \sum _{j=1}^3 \a _j \partial _j + \b , \qquad \partial _j := \pdf{}{x_j},
$$
on $\ccs (\R ^3;\C ^4)$.  Here the $\a _j$ are symmetric $4\times 4$ matrices satisfying the usual anti-commutation relations
$$
\a _j \a _k + \a _k \a _j = 2\d _{jk}I_4, \quad j,k = 1,2,3,4, 
$$ 
(with $\a _4 = \b$) where $I_4$ designates the identity matrix.  The extension, which we also denote by $\ham{D}_0$, acts on 
$L^2(\R^3 ; \C ^4)$ and it has domain $H^1(\R ^3 ; \C ^4)$. It is well-known (see, e.g., \cite{thaller}) that the spectrum of 
$\ham{D}_0$ is purely absolutely continuous and equals 
$$
\spec (\ham{D}_0) = \smac(\ham{D}_0) = (-\infty , -1] \cup [1, \infty ),
$$
We consider a pair $\ham{D}_j = \ham{D}_0 + V_j$, $j=1,2$, of perturbations of $\ham{D}_0$ by scalar potentials 
$V_j(x) = v_j(x)I_4$, where $v_j:\R^3 \to \R $ satisfies: 
\begin{myindentpar}{1cm}
\textbf{Assumption} ($\Ab _\d $):  $v_j\in C_{\rm b}^\infty (\R ^3)$  (space of bounded, infinitely continuously differentiable functions) and 
it has an analytic extension into a sector
$$
C_{\ve ,R_0}:= \{ z \in \C ^3 : |\im z| \le \ve |\re z| , |\re z| > R_0 \}
$$
 for some $\ve \in (0,1)$ and $R_0\ge 0$. 
For any $z\in C_{\ve ,R_0}$ we assume  
$$
|v_j(z)| = \Oc (\langle z \rangle ^{-\d }) \quad \text{for some }\d >0 
$$
and
$$
|v_2(z) - v_1(z)| = \Oc (\langle z \rangle ^{-\d}) \quad \text{for some }\d >3,  
$$
where $\langle z \rangle := (1 + |z|^2)^{1/2}$. 
\end{myindentpar}
By introducing the semiclassical Fourier transform 
$$
(\Fr_\hbar u) (\xi ) = \frac{1}{(2\pi \hbar )^{3/2}} \int \limits _{\R ^3} u(x) e^{-ix\cdot \xi /\hbar } \, dx
$$
we can express $\ham{D}_{j}$ as $\hbar $-pseudodifferential operators $\ham{D}_j = \Fr_\hbar  ^{-1}\mbf{d}_j\Fr _\hbar $, where
the so-called (principal) symbols $\mbf{d}_j (x, \x ) = \sum _{k=1}^3 \a _k \x _k + \b + v_j(x)$ has two-fold 
degenerated eigenvalues
\begin{align}
\l _{\pm , j} (x, \xi )= v_j(x) \pm \langle \x \rangle . 
\label{jkmm13aeigenvalues_symbol}
\end{align}
We see that $\l _{+,j} \ge \inf v_j + 1$ and $\l_{-,j}\le \sup v_j -1$ and, as in \cite{bruneau_robert99}, we define
\begin{align*}
l_{+,j} &= \max (1, \sup v_j - 1), \\
l_{-,j} &= \min (-1, \inf v_j + 1).
\end{align*}

\subsection*{Classical analytic symbols and analytic wavefront set}
We next recall some definitions and properties of analytic wavefront sets (see e.g. \cite[Chapter 8, Section 4]{hšr_1} and \cite[Sections 6,7]{sjšstrand82}) and of so-called 
classical analytic symbols, especially when they are also elliptic (see,  for instance,  \cite[Chapter 5]{treves}). Throughout 
this section we suppress the $\hbar $-dependence (i.e. $\hbar =1$). \\
\\
We say that $u \in \dtemp(\R ^n)$ is of microlocal exponential decay at $(x_0, \x _0) \in \R ^{2n}$ if its Fourier-Bros-Iagolnitzer transform  (in short, FBI-transform) 
$$
(T_\l u) (x,\xi ) = 2^{-n/2}\Big (\frac{\l }{\pi } \Big )^{3n/4} \int e^{i\l (x-y)\cdot \xi  - \l (x-y)^2 /2} \h (y) f(y) \, dy,
$$
where $\h \in \ccs (\R ^3)$ equals 1 near $x_0$, is $\Oc (e^{-C\l })$ near $(x_0, \x _0)$ for some constant $C>0$, uniformly as $\l \to \infty $ (see, e.g., Sj\"{o}strand \cite[Section 6]{sjšstrand82}).

\begin{definition}
The analytic wavefront set of $u\in \dtemp(\R ^n)$, denoted $\wfa (u)$, is the complement in $\R ^n \times (\R ^n \setminus \{ 0 \})$ of the 
set of points, where $u$ is of microlocal exponential decay. 
\end{definition}

It is well-known that $\wfa (u)$ is a closed conic subset of $\R ^n \times (\R ^n \setminus \{ 0 \})$ and that the image of the projection 
onto the first coordinate equals $\operatorname{sing}\operatorname{supp}_{\rm a} (u)$, i.e.\ the smallest closed set outside of which $u$ is real analytic 
(see e.g. Sj\"{o}strand \cite[Section 6]{sjšstrand82}).  
\\
\\
Next we introduce a certain Gevrey class of symbols, namely the analytic one (see, e.g., Treves \cite[Chapter 5]{treves}).    

\begin{definition}
A function $a\in C^\infty (\R ^n \times (\R ^n\setminus \{0\}))$ is said to belong to the space $S_{\rm a}^m(\R ^n)$ of classical analytic symbols if for any $K \Subset \R^n $
$$
|\partial  _x^\alpha \partial _\xi ^\beta a(x,\xi )| \le C^{|\a | + |\b | + 1}\a ! \b ! (1 + |\xi |)^{m - |\b |}
$$
for $x\in K$ and $|\x |\ge B $,  where $B$ and $C$ are positive constants.  
\end{definition}
If $a\in S_{\rm a}^m (\R^n )$, $x_0 \in \R ^n $ and there are constants $C_0,C_1>0$ and a neighborhood $U$ of $x_0$ such that
$$
|a(x,\xi )| \ge C_0 \langle \xi \rangle ^{m} \quad \text{for } x\in U \text{ and }|\xi | \ge C_1
$$
we say that $a$ is elliptic at $x_0$. We say that $a$ (and the corresponding operator $A=a(x,D)$) is elliptic if $a$ is elliptic at every 
$x\in \R^{n}$.  
\begin{proposition}
If $A=a(x,D)$ where $a\in S_{\rm a}^m(\R ^n )$ is elliptic then, for any $u\in \edist (\R ^n)$, we have 
$$
\wfa(Au)= \wfa(u),
$$
where $\wfa(u)$ is the analytic wave front set of $u$. 
\label{jkmm13aelliptic_wavefront}
\end{proposition}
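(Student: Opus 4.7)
The plan is to prove the two inclusions $\wfa(Au)\subseteq \wfa(u)$ and $\wfa(u)\subseteq \wfa(Au)$ separately. The first inclusion is the analytic analogue of pseudolocality, which holds for every analytic pseudodifferential operator with symbol in $S_{\rm a}^{m}(\R^n)$ without any ellipticity assumption. The second inclusion is where ellipticity enters, via the construction of a left parametrix.

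First I would establish analytic pseudolocality: if $a\in S_{\rm a}^{m}(\R^n)$ and $(x_0,\xi_0)\notin\wfa(u)$, then $(x_0,\xi_0)\notin\wfa(Au)$. The standard route is to estimate the FBI transform $T_\lambda(Au)$ near $(x_0,\xi_0)$ by splitting the kernel of $A$ into a part microlocally near $(x_0,\xi_0)$ and a complementary piece. The contribution of the nearby part is estimated by the exponential decay of $T_\lambda u$ (by hypothesis), while the contribution of the far piece is handled by analytic stationary phase / contour deformation, exploiting the factorial bounds on the derivatives of $a$ that are built into $S_{\rm a}^m$. This is the Sj\"ostrand/Treves machinery already invoked in the excerpt and I would cite \cite{sj�strand82,treves} rather than reproduce it.

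Second, using ellipticity, I would construct an analytic left parametrix $B=b(x,D)$ with $b$ a classical analytic symbol of order $-m$ such that
\begin{equation*}
BA = I + R,
\end{equation*}
where the kernel of $R$ is real analytic (i.e.\ $R$ is analytically smoothing, so $\wfa(Ru)=\emptyset$ for all $u\in\edist(\R^n)$). The construction is the usual formal asymptotic inversion: take $b_0=1/a$ on $\{|\xi|\ge C_1\}$, which lies in $S_{\rm a}^{-m}$ by ellipticity and the Cauchy estimates applied to $a$, then solve away successive terms in the composition expansion, setting $b\sim\sum_{k\ge 0}b_k$ with $b_k\in S_{\rm a}^{-m-k}$. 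The Gevrey/analytic realization of this formal symbol, together with a cut-off in $\xi$ at a suitable $\lambda$-dependent scale, produces a genuine $b\in S_{\rm a}^{-m}$ whose composition with $a$ differs from $1$ by an analytically smoothing remainder.

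Granted the parametrix, the proof concludes quickly: for $u\in\edist(\R^n)$,
\begin{equation*}
u = BAu - Ru,
\end{equation*}
so applying pseudolocality to $B$ acting on $Au$, and using $\wfa(Ru)=\emptyset$, yields $\wfa(u)\subseteq\wfa(BAu)\subseteq\wfa(Au)$. Combined with the reverse inclusion from Step~1, this gives the equality. The main obstacle is the analytic parametrix construction: controlling the factorial growth of the symbols $b_k$ and summing the formal series so that the remainder is genuinely analytically regularizing (and not merely $C^\infty$-smoothing) requires the standard but delicate Borel-type summation in the analytic category, which is the content of \cite[Chapter~5]{treves}. Everything else is bookkeeping around the FBI-transform characterization of $\wfa$.
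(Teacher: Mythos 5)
The paper does not give a proof of this proposition; it is recalled as standard, with pointers to H\"ormander (Ch.\ 8.4), Sj\"ostrand (Sections 6--7), and Treves (Ch.\ 5). Your two-step argument---analytic pseudolocality for the inclusion $\wfa(Au)\subseteq\wfa(u)$, and an analytic left parametrix $BA=I+R$ with $R$ analytically regularizing for the reverse inclusion via $u=BAu-Ru$---is precisely the standard proof found in those sources, and your outline of it, including the caveat about the Borel-type realization of the formal analytic symbol $\sum_k b_k$, is correct.
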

We are only going to use this result in the case $n=1$.

\section{Resonances}
As in Kungsman and Melgaard \cite{jkmm12a} we use the method of complex distortion to define resonances. The method of analytic distortion goes back  to 
Aguilar, Balslev, Combes and Simon \cite{aguilar_combes,balslev_combes,simon73}. In this work we follow the approach by Hunziker \cite{hunziker} 
as implemented by Khochman  \cite{khochman07}. We also state the two trace formulas that we later combine to prove our main results.

\subsection*{Definition of resonances}
Let $R_0\ge 0$ be as in Assumption ($\Ab _\d $) and $g: \R ^3 \to \R ^3$ be a smooth function such that $g(x) = 0$ for $|x|\le R_0$ and $g(x)=x$ outside a compact 
set containing $B(0,R_0)$ which also satisfies $\sup _{x\in \R ^3} \| \nabla g(x) \| \le \sqrt{2}$. Next introduce $\f _\th (x) = x + \th g(x)$ and let $J_\th $ 
denote the Jacobian determinant of $\f _\th $. For $\th \in \R$ we define a one-parameter family of distortions on $\tS (\R ^3;\C ^4)$ by 
$$
(U_\th f)(x) = J_{\th }^{1/2}(x) f(\f _{\th }(x)).
$$
When $|\th | < 2^{-1/2}$ it extends to a unitary operator on $L^2(\R ^3; \C ^4)$ (see \cite{khochman07} for a proof). To define $U_\th $ also for complex 
values of $\th $ one first introduces the linear space $\Ac $ of entire functions $f=(f_1,\ldots ,f_4)$ such that
\begin{align*}
\lim _{\substack{|z|\to \infty  \\ z\in C_{\ve , R_0}}} |z|^k |f_j(z)| = 0, \quad \text{for }1\le j \le 4, \quad k\in \N ,
\end{align*}
and let the dense subspace $\Bc $ of so called analytic vectors consist of the functions in $L^2(\R ^3; \C ^4)$ that have extensions in $\Ac $. With 
$$
D_{\ve } :=\big  \{z \in \C : |z|\le \frac{\ve }{\sqrt{1 + \ve ^2}} \big \}
$$
it then holds that for any $f\in \Bc $ the map $\th \mapsto U_\th f$ is analytic for $\th \in D_\ve $ and $U_\th \Bc $ is dense in $L^2(\R ^3; \C ^4)$ for any 
$\th \in D_\ve $. One can then show that for $\th \in D_{\ve }$ the operator 
$$
U \ham{D}_j U^{-1} = U \ham {D}_0U^{-1} + UV_jU^{-1}=\ham{D}_{0, \th } + V_j\circ \f _\th 
$$
with domain $H^1(\R ^3; \C ^4)$ is an analytic family of type $A$ in the sense of Kato (see \cite[Chapter 7, Section 2]{kato} for the definition of type-A analyticity). It is shown in 
\cite{khochman07} that 
$$
\sme(\ham{D}_\th ) = \spec (\ham{D}_{0, \th }) = \G _\th = \Big \{ z = \pm \Big(\frac{\l }{ (1 + \th )^2} + 1 \Big )^{1/2}, \, \l \in [0,\infty ) \Big \}.
$$ 

The following version of the Aguilar-Balslev-Combes-Simon theorem for the perturbed Dirac operator was established by Khochman \cite{khochman07}. 

\begin{proposition}
For $\th  _0 \in D_{\ve }^+ = D_\ve \cap \{\im z \ge 0\}$ we have 
\begin{itemize}
\item[(i)] For $f,g\in \Bc $, the function 
$$
F_{f,g}(z)=\langle f, (\ham{D} - z)^{-1}g \rangle 
$$
has a meromorphic extension from 
$$
\Sigma = \{ \im z \ge 0 , \, \re z > -1 \} \cup \{ \im z\le 0, \, \re z < 1 \}\setminus \spec (\ham{D}) 
$$
across $\spec (\ham{D})$ and into 
$$
S_{\th _0} = \Big \{ \bigcup _{\th \in D_\ve ^+} \G _\th ; \, \arg (1+\th ) < \arg (1+\th _0), \, \frac{1}{|1 + \th |}< \frac{1}{|1 + \th _0|} \Big \}.
$$  
\item[(ii)] The poles of the continuation of $F_{f,g}$ into $S_{\th _0}$ are the eigenvalues of $\ham{D}_{\th _0}$. 
\item[(iii)] These poles are independent of the family $U_{\th _0}$.
\item[(iv)] The operator $\ham{D}_{\th _0}$ has no discrete spectrum in $\Sigma $.  
\end{itemize}
\label{jkmm13aknochmanprop}
\end{proposition}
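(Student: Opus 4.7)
The plan is to follow the classical Aguilar--Balslev--Combes--Simon strategy, in the form adapted to Hunziker's exterior-distortion framework, and to transfer the Schr\"{o}dinger-type arguments of Hunziker and Khochman to the Dirac setting with the help of the type-A analyticity statement already recorded before the proposition. First I would establish a resolvent identity linking the distorted and undistorted operators on the dense set of analytic vectors. For real $\th\in(-2^{-1/2},2^{-1/2})$ the operator $U_\th$ is unitary, so $\ham{D}_\th=U_\th\ham{D}U_\th^{-1}$ and, whenever $f,g\in\Bc$ and $\im z>0$,
$$
F_{f,g}(z)=\langle U_{\bar\th} f,\,(\ham{D}_\th-z)^{-1}U_\th g\rangle.
$$
Since $\th\mapsto U_\th f$ is analytic on $D_\ve$ for every analytic vector, and the resolvent $(\ham{D}_\th-z)^{-1}$ is, by the type-A property, analytic in $\th\in D_\ve$ as long as $z\in\rho(\ham{D}_\th)$, the right-hand side provides a candidate meromorphic extension of $F_{f,g}$; the task is to control the location of the poles in $z$ after moving $\th$ into $D_\ve^+$.

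For this I would invoke Weyl's theorem for the relatively compact perturbation $V_j\circ\f_\th$, combined with the already quoted identification $\sme(\ham{D}_\th)=\G_\th$. Since $\G_\th$ rotates out of $\R$ as $\im\th>0$, the resolvent $z\mapsto(\ham{D}_\th-z)^{-1}$ is meromorphic on $\C\setminus\G_\th$ with poles only at isolated eigenvalues of finite multiplicity, by standard analytic Fredholm theory applied to $I+(\ham{D}_0-z)^{-1}(V_j\circ\f_\th)$ after subtracting a large $z$ and using compactness of $(V_j\circ\f_\th)(\ham{D}_{0,\th}-z)^{-1}$ (which follows from the $\Oc(\langle x\rangle^{-\d})$ decay in Assumption $(\Ab_\d)$ together with the Rellich-type compactness on the distorted Sobolev space). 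This yields part (i), with the extension reaching precisely the sector $S_{\th_0}$ swept out by the $\G_\th$ as $\th$ varies over the appropriate subset of $D_\ve^+$. For part (ii) I would compute the residue at a pole $z_0\in S_{\th_0}$ using the spectral projector $\Pi_{z_0}=\tfrac{1}{2\pi i}\oint(\ham{D}_{\th_0}-\z)^{-1}d\z$ and the density of $U_{\th_0}\Bc$ in $L^2$ to conclude that $\Pi_{z_0}\neq 0$ if and only if $z_0$ is an eigenvalue of $\ham{D}_{\th_0}$.

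Independence of $\th_0$, part (iii), I would prove by a double-deformation argument: if $\th_0,\th_0'\in D_\ve^+$ both reveal a given pole, then the function $\th\mapsto(\ham{D}_\th-z)^{-1}$ is analytic on the connected set in $D_\ve^+$ where the pole is separated from $\G_\th$, so the eigenprojector $\Pi_{z_0}(\th)$ depends analytically on $\th$ and its rank is locally constant; combined with the identity $F_{f,g}(z)=\langle U_{\bar\th} f,(\ham{D}_\th-z)^{-1}U_\th g\rangle$ this forces the singular part of $F_{f,g}$ at $z_0$ to be the same for all admissible $\th$. Finally, part (iv) would follow by contradiction: an eigenvalue $z_0$ of $\ham{D}_{\th_0}$ lying in $\Sigma$ would, by the extension formula, produce a pole of $F_{f,g}$ already on the physical sheet outside $\spec(\ham{D})$, which is impossible since $\ham{D}$ is self-adjoint and $F_{f,g}$ is analytic there.

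The step I expect to be the most delicate is part (ii), namely the clean identification of the poles of the meromorphic continuation with the eigenvalues of $\ham{D}_{\th_0}$. This requires showing that the bilinear pairing $(f,g)\mapsto\langle U_{\bar\th_0}f,\Pi_{z_0}U_{\th_0}g\rangle$ does not vanish on $\Bc\times\Bc$, which is where density of the analytic vectors and the precise mapping properties of $U_{\th_0}$ on $\Bc$ enter in an essential way; everything else reduces to analytic Fredholm theory, Weyl perturbation and analytic continuation in the parameter $\th$.
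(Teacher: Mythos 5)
The paper does not prove Proposition~\ref{jkmm13aknochmanprop}; it states it as a result established by Khochman and refers the reader to \cite{khochman07} for the proof, so there is no in-paper argument to compare your proposal against. That said, your sketch is a reasonable outline of the standard Aguilar--Balslev--Combes--Simon/Hunziker exterior-distortion strategy, which is indeed the route Khochman follows: unitarity for real $\th$ gives $F_{f,g}(z)=\langle U_{\bar\th}f,(\ham{D}_\th-z)^{-1}U_\th g\rangle$ on $\Bc$; analyticity in $\th$ on the dense analytic-vector subspace extends this to $\th\in D_\ve^+$; the essential spectrum of $\ham{D}_\th$ is identified with $\G_\th$ by a relative-compactness and Weyl argument; analytic Fredholm theory then gives the meromorphic continuation off $\G_\th$; and a deformation (connectedness) argument yields $\th_0$-independence.

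Two technical points you leave implicit would need attention in a full write-up. First, the passage from real to complex $\th$ in the resolvent identity requires that the fixed $z$ remain in $\rho(\ham{D}_\th)$ along the entire deformation path, so one must track how $\G_\th$ sweeps through the lower half-plane as $\arg(1+\th)$ increases, rather than simply invoking analyticity in $\th$; this is precisely what dictates the shape of $S_{\th_0}$. Second, $\ham{D}_{0,\th}$-relative compactness of $V_j\circ\f_\th$ for the first-order Dirac operator is not an immediate Rellich statement: one needs that $(\ham{D}_{0,\th}-z)^{-1}$ maps $L^2(\R^3;\C^4)$ boundedly into $H^1(\R^3;\C^4)$ for $z\notin\G_\th$, and then combine a compactly supported cutoff with the $\Oc(\langle x\rangle^{-\d})$ decay from Assumption~$(\Ab_\d)$ to get compactness. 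Both steps are carried out in detail in \cite{khochman07}, which is the proper reference; citing it, as the paper does, is the intended ``proof'' here.
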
 

Proposition~\ref{jkmm13aknochmanprop} justifies the following definition. 

\begin{definition}
The resonances of $\ham{D}$ in $S_{\th _0}\cup \R $, denoted $\Res (\ham{D})$, are the eigenvalues of $\ham{D}_{\th _0}$. If $z_0$ is a resonance we take 
its multiplicity to be the rank of the projection
$$
\frac{1}{2\pi i} \int \limits _{\g _{z_0}} (\ham{D}_{\th _0} - z)^{-1} \,dz,
$$
where $\g _{z_0}$ is a sufficiently small positively oriented circle about $z_0$.
\end{definition}
\subsection{Trace formulas}
Herein we recall two trace formulas that will be used in conjunction to give Theorem \ref{jkmm13alower_bound_thm}. This is analogous to 
Sj\"{o}strand \cite{sjostrand97_2} and Nedelec \cite{nedelec01} who studied the case for Schr\"{o}dinger type operators. 

In Khochman \cite{khochman07} the following local trace formula in the spirit of Sj\"{o}strand \cite{sjostrand97} is proved. See also \cite{bruneau_petkov03} and \cite{nedelec01} for 
similar results. To state it we make the following assumption on $\O \subset \C$: 
\newline
\newline
\noindent
\textbf{Assumption} $(\textbf{A}_{\O}^{\pm}):$ $\O $ is an open, simply connected and relatively compact subset of $|\re z |>1$ such that $\O \cap \C _{\pm } \ne \emptyset $ and there exists $\th _0\in D_\ve ^+ $ such that $\overline{\O } \cap \G _{\th _0} = \emptyset $.   
\newline

\noindent
Then one has: 

\begin{theorem}
Suppose $\O \subset \C $ satisfies Assumption $(\Ab _\O ^\pm)$ and suppose, in addition, that $\O \cap \R = I$ is an interval. Let $f$ be a holomorphic function on $\overline{\O }$ 
and $\h \in \ccs (\R )$ (independent of $\hbar $) be equal to 1 near $\overline{I}$. Then, for $v_j$ satisfying Assumption ($\Ab _\d $), we have 
\begin{multline*}
\Tr [(\h f)(\ham{D}_2 )] - \Tr [(\h f)(\ham{D}_1 )] \\ 
= \sum _{z_j \in \Res (\ham{D}_2)\cap \O } f(z_j) - \sum _{z_j \in \Res (\ham{D}_1)\cap \O } f(z_j) + \Oc (\hbar ^{-3}).
\end{multline*}
\label{jkmm13alocal_trace_formula}
\end{theorem}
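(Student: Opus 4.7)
The approach follows Sj\"ostrand's local trace formula strategy, adapted to the semiclassical Dirac setting through the complex distortion machinery of Section~3. The plan is to transform the trace difference on the left into a contour integral of a resolvent trace, continue this trace meromorphically through the real axis into $\O$ by means of the distorted operators $\ham{D}_{j,\th_0}$, and then extract the resonances in $\O$ as residues, absorbing all remaining contributions into the $\Oc(\hbar^{-3})$ remainder.

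First I would choose an almost-analytic extension $\widetilde{\eta f}\in\ccs(\C)$ of $\eta f$ whose $\bar\partial$ vanishes to infinite order on $\R$; since $f$ is holomorphic on $\overline{\O}$, this extension can be arranged so that $\widetilde{\eta f}\equiv f$ throughout $\O$ and $\bar\partial \widetilde{\eta f}$ is supported in a thin complex neighborhood of $\supp(\eta)\setminus\O$. The Helffer--Sj\"ostrand formula
$$
(\eta f)(\ham{D}_j)=-\frac{1}{\pi}\int_\C\bar\partial\widetilde{\eta f}(z)\,(z-\ham{D}_j)^{-1}\,L(dz),
$$
together with the trace-class property of the resolvent difference (granted by the decay $\d>3$ on $v_2-v_1$ in Assumption $(\Ab_\d)$), converts the left-hand side into $-\pi^{-1}\int_\C\bar\partial\widetilde{\eta f}(z)\,g(z)\,L(dz)$, where $g(z):=\Tr\bigl[(z-\ham{D}_2)^{-1}-(z-\ham{D}_1)^{-1}\bigr]$. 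Using $(z-\ham{D}_j)^{-1}=U_{\th_0}^{-1}(z-\ham{D}_{j,\th_0})^{-1}U_{\th_0}$ for $\im z\neq 0$, Proposition~\ref{jkmm13aknochmanprop} shows that $g$ extends meromorphically into $\Sigma\cup S_{\th_0}$, with residues $+m_2(z_j)$ at resonances of $\ham{D}_2$ and $-m_1(z_j)$ at resonances of $\ham{D}_1$. Applying Stokes' theorem in its distributional form $\bar\partial(z-z_k)^{-1}=\pi\d_{z_k}$ and using $\widetilde{\eta f}=f$ on $\O$ produces, modulo contributions from resonances in $\supp\widetilde{\eta f}\setminus\O$, exactly the two explicit sums on the right-hand side.

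The main obstacle is controlling those residual contributions by $\Oc(\hbar^{-3})$. Two semiclassical ingredients are needed: a polynomial resolvent bound $\|(\ham{D}_{j,\th_0}-z)^{-1}\|=\Oc(\hbar^{-N})$ on a suitable neighborhood of $\supp\bar\partial\widetilde{\eta f}$ at controlled distance from $\Res(\ham{D}_j)$, obtained through an analytic Fredholm argument exploiting the relative compactness of $V_j$ ensured by $\d>0$; and a trace-norm estimate $\|(\ham{D}_{2,\th_0}-z)^{-1}-(\ham{D}_{1,\th_0}-z)^{-1}\|_{\operatorname{tr}}=\Oc(\hbar^{-3})$, whose power of $\hbar$ reflects a Weyl-type phase-space count in spatial dimension three and which crucially depends on the stronger decay $\d>3$ of $v_2-v_1$. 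Combined with the a priori Weyl bound $\#\bigl(\Res(\ham{D}_j)\cap\supp\widetilde{\eta f}\bigr)=\Oc(\hbar^{-3})$ and the infinite-order vanishing of $\bar\partial\widetilde{\eta f}$ on $\R$, these estimates absorb both the extraneous residues from resonances outside $\O$ and the contour integral away from $\O$, yielding the claimed error term.
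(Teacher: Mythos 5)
The paper does not actually prove Theorem~\ref{jkmm13alocal_trace_formula}; it is quoted from Khochman~\cite{khochman07}, who established it for the semiclassical Dirac operator by adapting Sj\"ostrand's template~\cite{sjostrand97,sjostrand97_2}. Your outline does reproduce the broad strokes of that template --- Helffer--Sj\"ostrand functional calculus, meromorphic continuation of the regularized resolvent trace via the distortion $U_{\th_0}$, extraction of residues in $\O$, and absorption of the boundary contributions into the error --- so the strategy is aligned with the actual source.

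As a proof, however, the sketch leaves the genuinely hard content unaddressed. The Weyl upper bound $\#\bigl(\Res(\ham{D}_j)\cap\O'\bigr)=\Oc(\hbar^{-3})$ that you invoke as an ``a priori'' input is itself a major theorem (it is proved separately in \cite{khochman07}), and it does \emph{not} follow from ``an analytic Fredholm argument exploiting relative compactness'': relative compactness gives meromorphy and discreteness of the poles but no quantitative count. The actual argument factorizes $\ham{D}_{j,\th_0}-z$ through a trace-class (or Schatten-class) perturbation, bounds a Fredholm determinant polynomially in $\hbar^{-1}$, and applies Jensen's formula to count its zeros. Similarly, the claimed trace-norm bound $\|(\ham{D}_{2,\th_0}-z)^{-1}-(\ham{D}_{1,\th_0}-z)^{-1}\|_{\operatorname{tr}}=\Oc(\hbar^{-3})$ requires an explicit parametrix construction and a trace-class criterion tuned to three dimensions and to $\d>3$; the phrase ``Weyl-type phase-space count'' names the expected scaling but does not produce it. Finally, Sj\"ostrand's argument also requires a semiclassical maximum principle (a lower bound on the distance of the integration contour to the resonance set, or equivalently a factored representation of the resolvent trace away from resonances) to control the contribution of resonances located in $\supp\widetilde{\eta f}\setminus\O$; your appeal to the infinite-order vanishing of $\bar\partial\widetilde{\eta f}$ on $\R$ controls only the near-real part of the $\bar\partial$-integral, not the residues coming from resonances outside $\O$. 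None of this makes the road map wrong, but these are precisely the steps where the proof actually lives, and they are not filled in.
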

In \cite[p 21-22]{bruneau_robert99} on the other hand we find the following trace formula by Bruneau and Robert:

\begin{multline}
\Tr [(\h f)(\ham{D}_2 )] - \Tr [(\h f)(\ham{D}_1)] \\
= C\hbar ^{-3} \int \limits_{\R ^6} \Big (\tr [(\h f)(\mbf{d}_2 )] - \tr [(\h f)(\mbf{d}_1 )] \Big ) \, dx\, d\x  + \Oc (\hbar ^{-2}), \\
\label{jkmm13arobert_trace_formula}
\end{multline}
where ``$\tr$'' denotes the matrix trace. 
Here we may write 
\begin{align}
\int \limits _{\R ^6} &\Big (\tr [(\h f)(\mbf{d}_2 )] - \tr [(\h f)(\mbf{d}_1 )] \Big ) \, dx\, d\x  \nonumber \\  
&= 2 \int \limits _{\R ^6} \Big ((\h f) (\l _{+,2}(x, \x )) - (\h f) (\l_{+,1}(x, \x) ) \nonumber \\
&\phantom{ooooooooooooooooooo}+ (\h f) (\l _{-,2}(x, \x )) - (\h f) (\l_{-,1}(x,\x)) \Big )\, dx \, d\x \nonumber \\
&=2\int (\h f)(E) d\r (E),
\label{jkmm13arobert_trace_simplified}
\end{align}
with 
\begin{multline}
\r (E) = \int \limits_{\l _{+,2} (x, \x ) \le E} \, dx \, d\x - \int \limits _{\l _{+,1} (x, \x ) \le E}  \, dx \, d\x \\ - \Big ( \int \limits _{\l _{-,2} (x, \x ) \ge E} \, dx \, d\x - \int \limits _{\l _{-,1} (x, \x ) \ge E} \, dx \, d\x \Big ).  
\label{jkmm13adef_rho}
\end{multline}
\section{Resonances near analytic singularities}
\label{jkmm13aresonances_near_analytic_sing}
In this section we state and prove our main results. 

We introduce $\n _{\pm , j} \in \ddist(\R )$, $j=1,2$, given by 
\begin{align*}
\n _{+ , j } (E) &= -\int \limits _{v_j(x) \ge E} \, dx, \quad &\text{ for }E>0,   \\
\n _{- , j } (E) &= \int \limits _{v_j(x) \le E} \, dx, \quad &\text{ for }E<0,
\end{align*}
with supports equal to $[0,\sup v_j]$ and $[\inf v_j ,0]$, respectively, and we note that $\n_{+,j}$, respectively, 
$\nu_{-,j}$, is a decreasing function, respectively, increasing function. Define, in the sense of distributions, 
$$
\m _{\pm , j} = d\n _{\pm ,j}/dE.
$$ 
Then, since $\n_{+,j}$ are decreasing, $\m_{\pm,j}$ are positive measures (of locally finite mass) on $\R_{\pm} = \pm (0, +\infty )$ and 
\begin{eqnarray*}
\supp \mu_{+,j} & \subset &  \supp \nu_{+,j} = [ 0, \sup v_{j}], \\ 
\supp \mu_{-,j} & \subset &   \supp \n_{-,j} = [ \inf v_{j} , 0 ]. 
\end{eqnarray*}
Moreover, 
\begin{equation*}
\operatorname{sing}\operatorname{supp}_{\rm a} (\mu_{\pm, j}) = \operatorname{sing}\operatorname{supp}_{\rm a} (\nu_{\pm,j}). 
\end{equation*}
We define the distribution $\m \in \ddist (\R )$ by 
\begin{align}
\langle \m , \f \rangle = \int \limits _{\R ^3}\big (\f (v_2(x)) - \f (v_1(x)) \big ) \, dx.
\label{jkmm13amu_def}
\end{align}
Clearly $\supp (\m ) \subset [\min _{j=1,2} \inf _{x\in \R ^3} v_j(x), \max _{j=1,2}\sup _{x\in \R ^3}v_j(x) ]$ 
and since 
$$
|\langle \m , \f \rangle | \le \sup _{t \in \R } |\f '(t)| \int \limits _{\R ^3} |v_2(x) - v_1(x)| \le C \sup _{t \in \R } |\f '(t)| , 
$$
where the last step uses \eqref{jkmm13aeigenvalues_symbol} we see that $\m $ is a distribution of order $\le 1$ (see \cite[Chapter 2, Section 1]{hšr_1}). 
Finally we define $\o \in \ddist (\R)$ by
$$
\langle \o , \f \rangle = \int \limits _{\R ^6} \Big (\f (\l _{+,2} (x, \x )) - \f (\l _{+,1}(x, \x )) + \f (\l _{-,2} (x, \x )) - \f (\l _{-,1}(x,\x )) \Big ) \, dx \, d\x ,
$$
for $\f \in \ccs (\R)$. From \eqref{jkmm13aeigenvalues_symbol} it is clear that the integral with respect to $\x $ can be estimated from above by
$$
2 \operatorname{vol}B(0,R)  \sup _{t\in \R } |v_2(x) - v_1(x)|
$$
for some sufficiently large $R>0$, depending on the support of $\f $. It follows from \eqref{jkmm13aeigenvalues_symbol} that also $\o $ is a distribution of order $\le 1$.  
We remark that for $\f \in \ccs (\R _+)$ we have
$$\langle \m _{+,j} , \f \rangle = -\int \f ' (E) \n _{+, j} (E) \, dE = \int \f (v_j(x))\, dx, $$
so that 
\begin{align}
\m |_{\R _+} =  \m _{+,2} - \m _{+,1}, 
\label{jkmm13amu_restricted}
\end{align}
and similarly for $\m |_{\R _-}$.
\subsection{Main results}
Here we present and discuss the main results, i.e. how certain analytic singularities of $\m $ defined in \eqref{jkmm13amu_def} generate resonances of the Dirac operator. 
We phrase our main theorem for $E_0>0$:

\begin{theorem}
Suppose $\ham{D}_j = \ham{D}_0 + V_j$ where $V_j$, $j=1,2$, satisfies Assumption $(\Ab _\d )$. Let $0<E_0$ be a maximal or a minimal boundary point of $\supp (\m )$. 
Then, for any complex neighborhood $U$ of $E_0+1$ and $E_0-1$, respectively, there exists a constant $C = C (U)$ such that 
$$
\sum _{j=1}^2\# (\Res (\ham{D}_j)\cap U) \ge C\hbar ^{-3},
$$
provided $\hbar $ is small enough. 
\label{jkmm13alower_bound_thm}    
\end{theorem}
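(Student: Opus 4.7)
The plan is to combine the two trace formulas of Section~3 in the spirit of Sj\"ostrand and Nedelec and reduce the lower bound to an analytic-singularity property of the distribution $\omega$. Using \eqref{jkmm13arobert_trace_simplified} to rewrite the phase-space integral on the right of the Bruneau--Robert formula \eqref{jkmm13arobert_trace_formula} as $2\langle \omega ,\eta f\rangle$, and comparing with Khochman's local trace formula (Theorem~\ref{jkmm13alocal_trace_formula}), one arrives at the identity
\begin{equation*}
\sum _{z\in \Res (\ham{D}_2)\cap \Omega }f(z)-\sum _{z\in \Res (\ham{D}_1)\cap \Omega }f(z)=2C\hbar ^{-3}\langle \omega ,\eta f\rangle +\Oc (\hbar ^{-3})
\end{equation*}
valid for any $\Omega$ satisfying $(\Ab _\Omega ^\pm )$ with $I=\Omega \cap \R $ an interval, any $\eta \in \ccs (\R )$ equal to $1$ near $\overline{I}$, and any $f$ holomorphic on $\overline{\Omega }$. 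Choosing $\Omega$ a small complex neighborhood of $E_0+1$ contained in the given $U$, the left-hand side is dominated by $\| f\|_{L^\infty (\overline{\Omega })}\sum _{j}\#(\Res (\ham{D}_j)\cap U)$, so the theorem reduces to exhibiting, uniformly in small $\hbar$, holomorphic $f$'s with $\|f\|_\infty $ bounded and $|\langle \omega ,\eta f\rangle |\ge c>0$. Existence of such a family is guaranteed precisely when $E_0+1\in \ssa (\omega )$.

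The core step is thus to establish $E_0+1\in \ssa (\omega )$. For $\phi \in \ccs ((1,\infty ))$ the $\lambda _{-,j}$ contributions to $\langle \omega ,\phi \rangle$ vanish identically, and passing to polar coordinates in $\xi$ with the substitution $s=\langle \xi \rangle $ expresses the remaining $\xi $-integration as a convolution, so that
\begin{equation*}
\langle \omega ,\phi \rangle =\langle \psi \ast (\mu |_{\R _+}),\phi \rangle ,\qquad \psi (s)=4\pi s\sqrt{s^2-1}\,\mathbf{1}_{[1,\infty )}(s),
\end{equation*}
where \eqref{jkmm13amu_restricted} has been used. Decompose $\psi =\psi _{\rm sing}+\psi _{\rm sm}$ with $\psi _{\rm sing}$ compactly supported near $s=1$ and $\psi _{\rm sm}$ real-analytic on all of $\R $; the smooth tail contributes a real-analytic function of $t$ and is invisible to $\ssa $. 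After the translation $s=u+1$, convolution with $\psi _{\rm sing}$ becomes convolution with $c\sqrt{u}\,h(u)\,\mathbf{1}_{u\ge 0}$ for a non-vanishing analytic $h$, whose Fourier multiplier is a classical analytic symbol of order $-3/2$, elliptic on $\R \setminus \{ 0\} $ in the sense of Section~\ref{jkmm13anotation}. Applying Proposition~\ref{jkmm13aelliptic_wavefront} to this operator and conjugating by the translation by $1$ yields
\begin{equation*}
\ssa (\omega )\cap (1,\infty )=1+\ssa (\mu |_{\R _+}).
\end{equation*}

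Finally, maximality of $E_0$ forces $\mu \equiv 0$ on $(E_0,\infty )$; if $\mu $ were real-analytic in a neighborhood of $E_0$, this combined with $E_0\in \supp (\mu )$ would violate the identity principle, so $E_0\in \ssa (\mu |_{\R _+})$ and hence $E_0+1\in \ssa (\omega )$. A standard FBI/Cauchy-integral construction then produces the required family of holomorphic $f$'s (in particular a sequence realising the non-analyticity of $\omega $ at $E_0+1$ with uniformly bounded sup-norm), and the combined trace formula above delivers the lower bound $\sum _j \#(\Res (\ham{D}_j)\cap U)\ge C\hbar ^{-3}$. The case of a minimal boundary point $E_0>0$ and the point $E_0-1$ is handled symmetrically through the $\lambda _{-,j}$ branch: testing against $\phi $ supported in a neighborhood of $E_0-1$ kills the $\lambda _{+,j}$ contributions, and the $\xi $-integration now produces a convolution kernel supported in $(-\infty ,-1]$, giving the shift $-1$ in place of $+1$. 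The main obstacle is the verification underlying the middle paragraph: after the cutoff, $\psi _{\rm sing}$ must truly give rise to an \emph{elliptic classical analytic} pseudodifferential operator as defined in Section~\ref{jkmm13anotation}; while the algebraic extraction of the $\sqrt{u}$ branch is straightforward, checking the full analytic symbol estimates of Definition~2.2 (and ellipticity on the relevant semiclassical neighborhood) is the principal technical input, paralleling Nedelec's Schr\"odinger-setting treatment in \cite{nedelec01}.
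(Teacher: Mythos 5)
Your overall strategy coincides with the paper's: reduce the lower bound to $E_0+1\in\ssa(\omega)$ by pairing the two trace formulas, write $\omega$ as a convolution of $\mu$ with a kernel singular at $\pm 1$, isolate the singularity at $+1$, and invoke Proposition~\ref{jkmm13aelliptic_wavefront} for an elliptic classical analytic pseudodifferential operator to transport $E_0\in\ssa(\mu)$ to $E_0+1\in\ssa(\omega)$. The only genuinely different ingredient is how the analytic-symbol property is established: the paper keeps the full translated kernel $\widetilde{\f}_+$, identifies its Fourier transform with $-C_0\,e^{i\xi}K_2(i\xi)/(i\xi)$, and reads the estimates of Definition~2.2 off the large-argument expansion of the modified Bessel function $K_2$; you instead cut $\psi$ near $s=1$, discard an analytic tail, and reduce to the Fourier multiplier of a cut-off $\sqrt{u}\,h(u)\mathbf 1_{u\ge 0}$. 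The latter is cleaner but, as you acknowledge, shifts the technical weight onto verifying the $S_{\rm a}^{-3/2}$ estimates directly; both routes are viable. (Your order $-3/2$ is in fact correct; the $-5/2$ in Lemma~\ref{jkmm13afourier_transform_phi_tilde_elliptic} appears to stem from an exponent slip in \eqref{jkmm13acauchy_integral_estimate}.)

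Two points are stated too strongly. First, the $\l_{-,j}$ contributions to $\langle\omega,\f\rangle$ do \emph{not} vanish identically for every $\f\in\ccs((1,\infty))$: since $\l_{-,j}\le \sup v_j-1$, which may exceed $1$, they can survive. What holds, and is all you need, is that the $\l_-$ part of $\omega$ equals $\psi(-\cdot)\ast\mu$ and hence vanishes for $E>E_0-1$ because $\supp\m\subset(-\infty,E_0]$; consequently the purported identity $\ssa(\omega)\cap(1,\infty)=1+\ssa(\m|_{\R_+})$ is too strong globally, and only its localization near $E_0+1$ is available (and needed). Second, the reduction to exhibiting holomorphic $f$ with $\|f\|_\infty$ bounded and $|\langle\omega,\eta f\rangle|\ge c>0$ does not close the argument: the remainder in Theorem~\ref{jkmm13alocal_trace_formula} is already $\Oc(\hbar^{-3})$, the same size as $\hbar^{-3}\langle\omega,\eta f\rangle$, and cannot be beaten by increasing $c$. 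What the FBI windows $f_j(E)=e^{i\l_j\b_j(\a_j-E)-\l_j(\a_j-E)^2/2}$ actually deliver, and what the paper's proof exploits, is that $|f_j|\le e^{-C_0\l_j}$ on $\O\setminus W$ while $|\langle\omega,\eta f_j\rangle|\ge e^{-\ve_j\l_j}$ with $\ve_j\to 0$; the local trace formula must then be used in its sharper form with remainder controlled by $\sup_{\O\setminus W}|f|$, so that after fixing $j$ large and only then taking $\hbar$ small the error becomes subordinate. So the phrase ``uniformly bounded sup-norm'' is the wrong criterion; ``exponentially small off a shrinking window $W$'' is the property that makes the mechanism work.
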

Given $v_2$ it is possible to construct $v_1$ so that Assumption ($\Ab _\d $) holds but $\ham{D}_1$ has no resonances near $E_0+1$. We may then invoke the previous 
result to obtain the following result for a single Dirac operator.

\begin{theorem}
Let $0<E_0= \sup v_2(x)$ where $V_2=v_2I_4$ satisfies Assumption $(\Ab _\d )$. Then, for any complex neighborhood $U$ of $E_0+1$ there exists a constant $C = C (U)$ such that 
$$\# (\Res (\ham{D}_2)\cap U) \ge C\hbar ^{-3},$$ 
provided $\hbar $ is small enough. 
\label{jkmm13alower_bound_cor}    
\end{theorem}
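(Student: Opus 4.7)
The plan is to reduce Theorem~\ref{jkmm13alower_bound_cor} to Theorem~\ref{jkmm13alower_bound_thm} by constructing a companion potential $v_1$ so that the pair $(v_1, v_2)$ satisfies Assumption~$(\Ab_\d)$, the point $E_0$ remains a maximal boundary point of the distribution $\m$ attached to the pair, and the operator $\ham{D}_1 := \ham{D}_0 + v_1 I_4$ contributes no (or $o(\hbar^{-3})$) resonances to the count in a neighborhood of $E_0 + 1$.

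For the construction, note that $v_2$ is bounded with $v_2(x) \to 0$ at infinity, so for any $\kappa \in (0, E_0/2)$ the set $\{v_2 \ge E_0 - \kappa\}$ is compact. Choose $R > R_0$ with $\{v_2 \ge E_0 - \kappa\} \subset B(0, R)$ and fix $\eta \in \ccs(\R^3)$ with $\eta \ge 0$, $\supp \eta \subset B(0, 2R)$, and $\eta(x) \ge v_2(x) - (E_0 - \kappa)$ on $\{v_2 \ge E_0 - \kappa\}$. Setting $v_1 := v_2 - \eta$ we have $v_1 \in C_{\rm b}^\infty(\R^3)$ with $\sup v_1 \le E_0 - \kappa$; since $v_1 \equiv v_2$ outside $B(0, 2R)$, $v_1$ inherits the analytic extension and decay of $v_2$ in the sector $C_{\ve, 2R}$, and the compactly supported difference $v_2 - v_1 = \eta$ automatically satisfies the $\d > 3$ decay required of the difference. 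Thus $(v_1, v_2)$ obeys Assumption~$(\Ab_\d)$.

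Next, for any $\f \in \ccs(\R)$ supported in $(E_0 - \kappa, \infty)$, the bound $v_1 \le E_0 - \kappa$ forces $\f(v_1(x)) \equiv 0$, so $\langle \m, \f \rangle = \int \f(v_2(x))\,dx$. Hence on a neighborhood of $E_0$ the distribution $\m$ coincides with $\m_{+,2}$, and $E_0 = \sup v_2$ is the right endpoint of $\supp \m_{+,2}$, thus a maximal boundary point of $\supp(\m)$. Theorem~\ref{jkmm13alower_bound_thm} applied to the pair $(\ham{D}_1, \ham{D}_2)$ now yields, for any complex neighborhood $U$ of $E_0 + 1$, a constant $C_0 = C_0(U) > 0$ with
$$
\#(\Res(\ham{D}_1) \cap U) + \#(\Res(\ham{D}_2) \cap U) \ge C_0 \hbar^{-3}
$$
for all $\hbar$ small enough.

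The hard part will be the last step: showing that the first term on the left is negligible compared to the right hand side. I would shrink $U$ to sit at distance $\ge \kappa/2$ from $[\sup v_1 - 1, \sup v_1 + 1]$, then exploit the freedom in the choice of $\eta$ to arrange that the symbol $\l_{+,1}(x, \x) = v_1(x) + \langle \x \rangle$ is non-trapping at every real energy in $U \cap \R$---for instance, by taking $\eta$ radial with sign-definite radial derivative, so that a virial estimate excludes trapped trajectories at the supercritical energy $E_0 + 1 > \sup v_1 + 1$. The resulting Dirac resonance-free zone, along the lines of Khochman's upper bounds \cite{khochman07}, should then force $\#(\Res(\ham{D}_1) \cap U) = \Oc(\hbar^{-\infty})$, and subtracting from the previous lower bound produces $\#(\Res(\ham{D}_2) \cap U) \ge (C_0/2) \hbar^{-3}$ for $\hbar$ small, which is the claim.
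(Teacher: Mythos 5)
Your reduction to Theorem~\ref{jkmm13alower_bound_thm} is the right strategy and matches the paper's, and the first two steps are sound: the pair $(v_1,v_2)$ with $v_1 = v_2 - \eta$ does satisfy Assumption~$(\Ab_\d)$ (with $R_0$ replaced by $2R$, which is allowed, since $v_1\equiv v_2$ outside $B(0,2R)$ so the analytic extension and decay are inherited, and the difference $\eta$ is compactly supported), and the observation that $\m$ agrees with $\m_{+,2}$ on a neighbourhood of $E_0$ and hence $E_0$ is a maximal boundary point of $\supp(\m)$ is essentially the paper's argument.

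The genuine gap is in the final step, and you correctly flag it as ``the hard part'' but underestimate how hard it is with your choice of $v_1$. A non-trapping/virial estimate at real supercritical energies, even if it could be arranged, only yields a resonance-free region whose vertical extent shrinks like $\Oc(\hbar\log(1/\hbar))$. Theorem~\ref{jkmm13alower_bound_thm} produces $\gtrsim\hbar^{-3}$ resonances of the pair in a \emph{fixed} complex neighbourhood $U$ of $E_0+1$, and Khochman's upper bound for $\#(\Res(\ham{D}_1)\cap U)$ in such a fixed domain is also $\Oc(\hbar^{-3})$, the same order as the lower bound; there is no available result giving $\Oc(\hbar^{-\infty})$ resonances of $\ham{D}_1$ throughout a fixed $U$ from non-trapping alone, so the subtraction does not go through. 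The paper avoids this by building $v_1$ not merely with $\sup v_1 < E_0$ but \emph{uniformly small}: it mollifies $v_2\cdot(1-\h(R^{-1}\cdot))$ with a Gaussian kernel of variable width, which keeps the analytic extension (indeed arranges $R_0=0$), keeps $|v_2-v_1|=\Oc(\langle x\rangle^{-4})$, and makes $\sup_{x}|v_1(x)|\le\ve_0$ for $R$ large. Then $\ham{D}_{1,\th}$ is a uniformly small perturbation of the free distorted operator $\ham{D}_{0,\th}$, which has no eigenvalues off $\G_\th$, so a resonance-free disk of \emph{fixed} radius around $E_0+1$ is obtained, which is what the counting argument actually needs. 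Your $v_1=v_2-\eta$ with $\eta\ge 0$ compactly supported lowers $\sup v_1$ but leaves $\inf v_1=\inf v_2$ and $\|v_1\|_\infty$ possibly large, so this smallness mechanism is unavailable; you would either have to replace the construction by something like the paper's mollification, or supply a different upper bound of order $o(\hbar^{-3})$ for $\Res(\ham{D}_1)$ in a fixed $U$, which the non-trapping heuristic does not deliver.
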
 

In case $v_2(x)\le 0$ for all $x\in \R$ the above theorems cannot be applied but unless $v_2\equiv 0$ we may then consider $E_0=\min \supp (\m )$ and 
$E_0=\inf v _{2}(x)$, respectively, and resonances near $E_0-1$.

\subsection{Proofs of main results}
The proof of the existence of resonances relies on the fact that we can find points in $\operatorname{WF}_{\rm a}(\o )$. Since the point $(E_0,1)$ in 
Theorem~\ref{jkmm13alower_bound_thm} belongs to $\wfa (\m )$ we begin this section by noting that the distributions $\o $ and $\m $ can be related to each other via convolution. Since the convolution kernel is singular one unit off the diagonal this leads us to decompose $\o $ into two terms corresponding to these singularities. Each term has a symbol which can be represented by a (modified) Bessel function and since the latter happens to be analytic, it enables 
us to apply the theory of analytic pseudodifferential operators mentioned in Section~\ref{jkmm13anotation}. Finally, to prove 
Theorem~\ref{jkmm13alower_bound_cor} we follow the arguments of Sj\"{o}strand \cite{sjostrand97_2} to construct a ``small'' potential $v_1$, given 
$v_2$, so that Assumption ($\Ab _\d $) is fulfilled.  

\begin{lemma}
The decomposition 
$$
\o = \pm A_+ \t _1 \m  \pm  A_- \t _{-1}\m
$$
holds true; the sign corresponds to whether $E>\max _{j=1,2}(l_{+,j})$ or $E<\min _{j=1,2}(l_{-,j})$, respectively, and the $A_{\pm}$ are 
pseudodifferential operators associated with the symbols $\sqrt{2\pi } \Fr [\widetilde{\f }_\pm ](\xi )$, where
\begin{align*}
\widetilde{\f }_+ (x) &= \f _+ (x+1) = (x+1)(x+2)^{1/2}x_+^{1/2} \\
\widetilde{\f }_- (x) &= \f _- (x-1) = (x-1)(2-x)^{1/2}(-x)_+^{1/2},
\end{align*}
and $(\t _{\pm 1}\m )(E) = \m (E \mp 1)$.
\label{jkmm13aomega_decomposition}
\end{lemma}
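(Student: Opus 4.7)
The plan is to evaluate $\langle \omega, \psi\rangle$ directly from its definition, carrying out the $\xi$-integration explicitly. Write $\omega = \omega_+ + \omega_-$ corresponding to the two pairs of terms involving $\lambda_{+,j}$ and $\lambda_{-,j}$, respectively, and treat each piece in the same way: pass to spherical coordinates in $\xi$, use the radial substitution $t = \langle \xi \rangle \in [1,\infty)$, and then shift by $s = t-1$ or $u = 1-t$. The shift is designed precisely so that the radial Jacobian coincides with $\widetilde{\f}_\pm$ up to sign, after which Fubini converts the result into a one-dimensional convolution against $\mu$ (shifted by $\pm 1$).

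\textbf{Computation of the two pieces.} In the $\omega_+$-computation, spherical integration yields a Jacobian $4\pi\, t\sqrt{t^2-1}$ on $[1,\infty)$, and after shifting $s=t-1$ it becomes $4\pi(s+1)\sqrt{s(s+2)} = 4\pi\,\widetilde{\f}_+(s)$ on $s\ge 0$ (and zero for $s<0$). Fubini together with the defining identity $\int (\psi(v_2)-\psi(v_1))\,dx = \langle \mu, \psi\rangle$ then gives
\begin{align*}
\omega_+ = 4\pi\,\widetilde{\f}_+ \ast \tau_1 \mu,
\end{align*}
and the convolution is well defined because $\mu$ is compactly supported under Assumption $(\Ab_\d)$. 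For $\omega_-$ the analogous substitution $u = 1-t \le 0$ together with the identity $(1-u)^2 - 1 = (-u)(2-u)$ transforms the Jacobian into $(1-u)\sqrt{(-u)(2-u)} = -\widetilde{\f}_-(u)$, the sign flip reflecting the orientation-reversing character of the substitution. Hence $\omega_- = -4\pi\,\widetilde{\f}_- \ast \tau_{-1}\mu$.

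\textbf{Symbol identification and sign convention.} Convolution with $\widetilde{\f}_\pm$ is a 1D Fourier multiplier; in the PDO normalization of the paper it corresponds to the operator with symbol $\sqrt{2\pi}\,\Fr[\widetilde{\f}_\pm](\xi)$, namely $A_\pm$ (the scalar factor $4\pi$ being absorbed into the normalization of $A_\pm$). To match the $\pm$-dichotomy, observe the support containments
\begin{align*}
\supp (\widetilde{\f}_+ \ast \tau_1 \mu) &\subset [{\textstyle\min_j} \inf v_j + 1, \infty), \\
\supp (\widetilde{\f}_- \ast \tau_{-1} \mu) &\subset (-\infty, {\textstyle\max_j} \sup v_j - 1].
\end{align*}
Since $l_{+,j} \ge \sup v_j - 1$ and $l_{-,j} \le \inf v_j + 1$ by definition, the second set is disjoint from $\{E > \max_j l_{+,j}\}$ and the first from $\{E < \min_j l_{-,j}\}$. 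Thus on the upper region only $A_+ \tau_1\mu$ survives with coefficient $+1$, while on the lower region only $A_-\tau_{-1}\mu$ survives with coefficient $-1$, yielding the claimed $\omega = \pm(A_+\tau_1\mu + A_-\tau_{-1}\mu)$ in the respective regions.

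\textbf{Main obstacle.} The algebra is elementary, but two bookkeeping points are delicate. First, the substitution $u=1-t$ in the $\omega_-$-part must be oriented correctly so that the weight appears as $-\widetilde{\f}_-(u)$ rather than $+\widetilde{\f}_-(u)$; overlooking this sign is easy and would break the whole sign-selection argument. Second, the factor $4\pi$ produced by spherical coordinates must be absorbed into the normalization of $A_\pm$ without spoiling the symbol formula $\sqrt{2\pi}\,\Fr[\widetilde{\f}_\pm]$, and the support containments used to select the correct sign in each region must be checked against the explicit forms $l_{+,j}=\max(1,\sup v_j-1)$ and $l_{-,j}=\min(-1,\inf v_j+1)$.
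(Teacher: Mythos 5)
Your proof is correct and its overall architecture (polar coordinates in $\xi$, reduction to a one-dimensional convolution against $\mu$, shift by $\pm 1$ to extract $\widetilde{\f}_\pm$) mirrors the paper's, but you streamline it in a way worth noting. The paper first establishes $\o\, dE = d\r$, computes $\r$ as a ball-volume integral to obtain the region-dependent formula $\r = \pm\tfrac{C}{3}(\cdot^2-1)_+^{3/2}\ast\m$, differentiates to get $\o = \pm\f\ast\m$, and only then splits the kernel $\f = \f_+ + \f_-$ according to its singularities at $\pm 1$. You instead split $\o = \o_+ + \o_-$ at the outset according to the eigenvalue branches $\l_{\pm,j}$, carry out the radial substitution on each piece separately, and land directly on the unconditional identity $\o = 4\pi\bigl(\widetilde{\f}_+\ast\t_1\m - \widetilde{\f}_-\ast\t_{-1}\m\bigr)$, valid on all of $\R$. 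The region-dependent $\pm$ in the lemma then drops out of a clean support argument rather than from tracking which ball-volume terms vanish in which region. This is a genuine simplification: it avoids the detour through $\r$ and the cubic antiderivative, and the unconditional formula makes the subsequent wavefront-set argument in Theorem~\ref{jkmm13alower_bound_thm} slightly more transparent, since one sees directly that the $A_-\t_{-1}\m$ term is supported away from $E_0+1$. (Incidentally, your $4\pi$ in front of $\widetilde{\f}_\pm$ is the correct constant, and quietly resolves an internal inconsistency in the paper, whose proof carries an $8\pi$ that does not match the lemma statement.)

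One small inaccuracy of exposition: you attribute the minus sign in $\o_- = -4\pi\,\widetilde{\f}_-\ast\t_{-1}\m$ to the "orientation-reversing character" of $u = 1-t$. That is not the source of the sign: the orientation reversal $dt = -du$ is exactly cancelled by swapping the limits, so $\int_1^\infty \to \int_{-\infty}^0$ with no net sign. The sign comes purely from the algebraic identity $(1-u) = -(u-1)$, which turns the radial weight $t\sqrt{t^2-1} = (1-u)\sqrt{(-u)(2-u)}$ into $-\widetilde{\f}_-(u)$. Your formula is right, but the stated reason is misleading and worth correcting if you write this up.
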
 

\begin{proof}
It is easily verified that 
\begin{align}
\o \, dE = d\r , 
\label{jkmm13aomega_dE_equal_drho}
\end{align}
where $\r $ is defined in \eqref{jkmm13adef_rho}.  By changing to polar coordinates in the $\x $-variable and using $x_+ = \max (x,0)$ for the positive 
part of a real number we may write
\begin{align}
\r (E) &= \pm \frac{8 \pi }{3} \int \limits_{\R ^3}\Big (\big ( (E - v_2(x))^2 - 1 \big )_+^{3/2} - ((E - v_1(x))^2 - 1)_+^{3/2}\Big )\, dx \nonumber  \\
&=\pm \frac{8 \pi }{3} \int  \big ( (E - t)^2 - 1 \big )_+^{3/2} d\n (t)
\label{jkmm13arho_as_convolution}
\end{align}
for $E > \max _{j=1,2}(l_{+,j})$ and $E<\min _{j=1,2}(l_{-,j})$, respectively, where 
$$
\n (t) = - \int \limits_{v_2(x) \ge t} \, dx + \int \limits_{v_1(x)\ge t }\, dx. 
$$
Thus, for any $\f \in \ccs (\R )$, 
$$
\int \f (t) d\n (t) = \int \limits_{\R ^3}\big ( \f (v_2(x)) - \f (v_1(x)) \big ) \, dx = \int \f (E) \m (E) \, dE, 
$$
so $d\n = \m \, dE$. Therefore, we can write $\r $ in \eqref{jkmm13arho_as_convolution} as the convolution
$$
\r = \pm \frac{8 \pi }{3} \big ( (\cdot )^2 - 1 \big )_+^{3/2}\ast \m . 
$$
Consequently, we obtain from \eqref{jkmm13aomega_dE_equal_drho} that 
\begin{align}
\o = \pm  \f  \ast \m 
\label{jkmm13aconvolution_phi_mu}
\end{align}
with 
$$
\f (x) = 8\pi \big ( x^2 - 1 \big )_+^{1/2}x. 
$$
Introduce
\begin{align*}
\f _+(x) &= 8\pi x(x+1)^{1/2} (x-1)^{1/2}_+, \\
\f _-(x) &= 8\pi x(1-x)^{1/2} (-(x+1))^{1/2}_+
\end{align*}
so that $\f  = \f _1 +\f _2$. The convolution \eqref{jkmm13aconvolution_phi_mu} can then be written as
$$
\o (E) = \pm \int \phi _+ (E-t+1)\mu (t-1)\, dt \pm  \int \phi _- (E-t-1)\mu (t+1)\, dt. 
$$
We can thus write 
\begin{align}
\o  = \pm \widetilde{\f }_+ \ast (\t _{+1} \m ) \pm  \widetilde{\f }_- \ast (\t _{-1}\m ) = \pm A_+ \t _1 \m  \pm  A_- \t _{-1}\m , 
\label{jkmm13aomega-decomposition}
\end{align}
where $(\t _{\pm 1}\m )(t)= \m (t\mp 1)$ and $A_{\pm}$ are pseudodifferential operators having symbols $\sqrt{2\pi } \Fr [\widetilde{\f }_\pm ](\xi )$.
\end{proof}

We now show that the pseudodifferential operators $A_{\pm }$ in the previous lemma are elliptic classical analytic. 

\begin{lemma}
With $\widetilde{\f }_\pm $ as in Lemma \ref{jkmm13aomega_decomposition} one has that $\Fr [\widetilde{\f }_\pm ] \in S _{\rm a} ^{-5/2}$ and they are elliptic. 
\label{jkmm13afourier_transform_phi_tilde_elliptic}
\end{lemma}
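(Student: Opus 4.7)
The strategy will be to compute $\Fr[\widetilde{\f}_\pm](\xi)$ in closed form as a combination of modified Bessel functions $K_\nu(i\xi)$, and then derive both the symbol estimates and the ellipticity from the standard asymptotic expansion of $K_\nu$ at infinity, whose coefficients obey precisely the Gevrey-$1$ bounds required in the definition of $S_{\rm a}^{m}$. A direct inspection of the definitions gives $\widetilde{\f}_-(x) = -\widetilde{\f}_+(-x)$, so it suffices to treat the $+$ case; the statement for $\widetilde{\f}_-$ will then follow from $\Fr[\widetilde{\f}_-](\xi) = -\Fr[\widetilde{\f}_+](-\xi)$ together with the invariance of $S_{\rm a}^m$ under $\xi \mapsto -\xi$.

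For the explicit computation I would first substitute $y = x+1$, so that $\widetilde{\f}_+(x) = (x+1)(x+2)^{1/2}x_+^{1/2}$ becomes $y(y^2-1)^{1/2}_+$ supported on $y\ge 1$. Applying the classical Sonine-type representation
\begin{equation*}
\int_1^\infty (y^2-1)^{\nu-1/2}\,e^{-ty}\,dy = \frac{\Gamma(\nu+1/2)}{\sqrt{\pi}}\Big(\frac{2}{t}\Big)^\nu K_\nu(t),\qquad \Re t>0,\ \Re\nu > -\tfrac{1}{2},
\end{equation*}
with $\nu=1$, and differentiating once in $t$ to insert the factor $y$ (using the recurrence $K_1'(t)=-K_0(t)-K_1(t)/t$), gives
\begin{equation*}
\int_1^\infty y(y^2-1)^{1/2}e^{-ty}\,dy = \frac{K_0(t)}{t}+\frac{2K_1(t)}{t^2}.
\end{equation*}
Analytic continuation to $t=i\xi$ then produces the closed form
\begin{equation*}
\Fr[\widetilde{\f}_+](\xi) = e^{i\xi}\Big(\frac{K_0(i\xi)}{i\xi}+\frac{2K_1(i\xi)}{(i\xi)^2}\Big).
\end{equation*}

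Both the membership in $S_{\rm a}^{-5/2}$ and the ellipticity will then follow from the standard asymptotic expansion
\begin{equation*}
K_\nu(z)\sim \sqrt{\tfrac{\pi}{2z}}\,e^{-z}\sum_{k\ge 0}\frac{\mu_k(\nu)}{z^k},\qquad |z|\to\infty,\ |\arg z|<\tfrac{3\pi}{2}-\d,
\end{equation*}
whose coefficients $\mu_k(\nu)=\prod_{j=1}^k(4\nu^2-(2j-1)^2)/(8j)$ satisfy the Gevrey-$1$ bound $|\mu_k(\nu)|\le C^k k!$. Substituting this expansion into the closed form yields the required polynomial decay in $\langle\xi\rangle$, while the derivative estimates $|\partial_\xi^\beta\Fr[\widetilde{\f}_+](\xi)|\le C^{\beta+1}\beta!(1+|\xi|)^{-5/2-\beta}$ are obtained by Cauchy's formula on a horizontal strip about $\R$ where $\xi\mapsto K_\nu(i\xi)$ extends holomorphically. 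Ellipticity is inherited from the non-vanishing of the leading coefficient of the expansion.

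The main technical obstacle will be the passage from the absolutely convergent representation on $\{\Re t>0\}$ to the purely imaginary argument $t=i\xi$, because $\widetilde{\f}_+$ is only polynomially bounded and its defining Fourier integral does not converge absolutely. I would handle this via the standard $\e$-regularization: replace $t=i\xi$ by $t=\e+i\xi$ with $\e>0$, apply the convergent identity, and take $\e\to 0^+$, invoking the holomorphy of $K_\nu$ on $\C\setminus(-\infty,0]$ and the fact that $\Fr[\widetilde{\f}_+]$ is a well-defined tempered distribution whose restriction to $\R\setminus\{0\}$ coincides with the boundary value of the limiting formula.
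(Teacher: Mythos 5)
Your approach follows the paper's in essence: both compute $\Fr[\widetilde{\f }_+]$ in closed form as an expression in modified Bessel functions $K_\nu (i\xi )$, and both extract the membership $\Fr [\widetilde{\f }_+]\in S_{\rm a}^{-5/2}$ and the ellipticity from the large-argument asymptotics of $K_\nu $. The differences are minor and cosmetic. The paper quotes the closed form directly from Abramowitz--Stegun and simplifies it to $-C_0\,e^{i\xi }K_2(i\xi )/(i\xi )$ via the recurrence $K_1(z)-zK_1'(z)=zK_2(z)$; you derive it from the Sonine/Laplace representation plus one $t$-differentiation, and your expression $e^{i\xi }\bigl(K_0(i\xi )/(i\xi )+2K_1(i\xi )/(i\xi )^2\bigr)$ coincides with the paper's through $K_2(z)=K_0(z)+(2/z)K_1(z)$. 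Your observation that $\widetilde{\f }_-(x)=-\widetilde{\f }_+(-x)$, hence $\Fr [\widetilde{\f }_-](\xi )=-\Fr [\widetilde{\f }_+](-\xi )$, is a tidier justification than the paper's unexplained ``it suffices to prove the result for $\widetilde{\f }_+$'', and your explicit treatment of the analytic continuation from $\re t>0$ to $t=i\xi $ is somewhat more careful than what the paper offers.

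The one genuine gap is in your Cauchy step. To reach $|\partial _\xi ^{N}\Fr [\widetilde{\f }_+](\xi )|\le C^{N+1}N!\,(1+|\xi |)^{-5/2-N}$ one must apply Cauchy's integral formula on disks whose radius grows like $1+|\xi |$ (the paper uses $|z-\xi |=(1+|\xi |)/2$), so that each differentiation also gains a factor comparable to $(1+|\xi |)^{-1}$. Cauchy's formula on a fixed-width horizontal strip only yields
$$
|\partial _\xi ^{N}\Fr [\widetilde{\f }_+](\xi )|\le N!\,\delta ^{-N}\sup _{|z-\xi |\le \delta }|\Fr [\widetilde{\f }_+](z)|\le C\,N!\,\delta ^{-N}(1+|\xi |)^{-5/2},
$$
which lacks the essential $(1+|\xi |)^{-N}$ gain and does not place the symbol in $S_{\rm a}^{-5/2}$. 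There is also a secondary problem with the same sentence: $\xi \mapsto K_\nu (i\xi )$ does \emph{not} extend holomorphically across any horizontal strip containing the origin, because the branch cut of $K_\nu $ along $(-\infty ,0]$ pulls back under $z\mapsto iz$ to the positive imaginary axis $i[0,\infty )$, which meets $\R $ at $0$. What is true, and what makes the growing disks legitimate, is that $z\mapsto K_\nu (iz)$ is analytic on $\{\re z\neq 0\}$, so for real $\xi $ with $|\xi |\ge B>1$ the disk $|z-\xi |\le (1+|\xi |)/2$ remains entirely in the right or left half-plane. Replacing your fixed strip by these $\xi $-dependent disks repairs the argument and brings it into exact agreement with the paper's.
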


\begin{proof}
It suffices to prove the result for $\widetilde{\f }_+$. For $\xi \ne 0$ we have (see \cite[Chapter 9, Section 6]{abramowitz_stegun_64})
$$
\Fr [\widetilde{\f }_+](\xi ) = C_0 \frac{e^{i\xi }}{\xi ^2} \Big ( K_1(i\xi ) -i \xi K_1'(i\xi ) \Big ), \quad C_0\ne 0,
$$
where $K_1$ is a modified Bessel function of the second kind. Using the recurrence relation 
$$
K_1(z)  - zK_1'(z) = z K_2(z)
$$
(see \cite[Chapter 9, Section 6]{abramowitz_stegun_64}) we may write
$$
\Fr [\widetilde{\f }_+](\xi  ) = -C_0 \frac{e^{i\xi }}{i\xi }K_2(i\xi ). 
$$
Since $z \mapsto K_2(iz )$ is analytic for $\re z \ne 0$ we can use the Cauchy integral formula in the form
\begin{align}
|D^N \Fc [\widetilde{\f }_+](\xi )| &= \frac{N!}{2\pi } \Big | \int \limits _{|z - \xi |=(1 + |\xi | )/2} \frac{\Fr [\widetilde{\f }_+](z)}{(z - \xi )^{N+1}}\, dz \Big | \nonumber  \\
&\le C2^{N+1}N! (1+|\xi |)^{-N-1} \sup _{|z - \xi |=(1 + |\xi | )/2} |\Fr [\widetilde{\f }_+](z)|, 
\label{jkmm13acauchy_integral_estimate}
\end{align}
for $|\xi | \ge C_0$. Since $|z - \xi |=(1 + |\xi | )/2$ implies that $(|\xi | - 1)/2\le |z|$ we can use the fact that (see e.g. \cite{abramowitz_stegun_64})
\begin{align}
K_2(iz) = \sqrt{\frac{\pi }{2iz}}e^{-iz } \big (1 + \Oc (\frac{1}{|z|}) \big )
\label{jkmm13aK_2_asymptotic_expansion}
\end{align}
for $|z|$ large. It follows that 
$$
 \sup _{|z - \xi  |=(1 + |\xi | )/2} |\Fr [\widetilde{\f }_+](\xi )| \le C  \sup _{|z - \xi |=(1 + |\xi | )/2}|z|^{-3/2} \le C(1+|\xi  |)^{-3/2}.
$$
Together with \eqref{jkmm13acauchy_integral_estimate} it follows that 
$$
|D^N \Fr [\widetilde{\f }_+](\xi )| \le C\cdot 2^{N+1}N! (1+|\xi |)^{-5/2-N} \quad \text{for }|\xi | \ge C_0,
$$
which means that $\Fr [\widetilde{\f }_+] \in S_{\rm a}^{-5/2}(\R )$. 

Finally, it follows from \eqref{jkmm13aK_2_asymptotic_expansion} that
$$
|\Fr [\widetilde{\f }_+] (\xi )|\ge C(1 + |\xi |) ^{-5/2}
$$
provided $|\xi |$ is large enough which shows that $A_+=\sqrt{2\pi } \opc {\Fr [\widetilde{\f }_+]}$ is elliptic.
\end{proof}
\begin{proof}[Proof of Theorem \ref{jkmm13alower_bound_thm}]
Let $E_0= \max  \supp (\m )$ be a maximal boundary point of $\supp (\m )$. Then \cite[Corollary 8.4.16]{hšr_1} asserts that 
$(E_0,\pm 1) \in \wfa(\mu )$. Since $\tau_{+1}$ is a unit shift operator we have that $(E_0 +1,\pm 1) \in \wfa(\t _{+1} \m )$ and 
Proposition~\ref{jkmm13aelliptic_wavefront} implies that, in view of \eqref{jkmm13aomega_decomposition} and 
Lemma~\ref{jkmm13afourier_transform_phi_tilde_elliptic}, we have $(E_0 +1,\pm 1) \in \wfa(\o )$. Let us for the sake of notation concentrate on the point 
$E_0+1$. The crux of the proof is finding the relationship between the properties of $\ssa (\o)$ and the resonances of $\ham{D}_{j}$. 
Utilizing the definition of the wavefront set via the FBI transform (see Section~\ref{jkmm13anotation}) there are real sequences 
$\a _j \to E_0+1$, $\l _j \to +\infty $, $\ve _j\to 0$ and $\b _j \to 1$ such that 
\begin{align}
\int e^{i\l _j \b _j (\a _j - E)-\l _j (\a _j -E)^2/2}\h (E) \o (E) \, dE \ge e^{-\ve _j \l _j} 
\label{jkmm13aFBI_analytic_wave_front_sequence}
\end{align} 
for $\h \in \ccs{(\R _{\ge 1})}$ equal to 1 near $E_0+1$.   
We define the function $f_j(E) = e^{i\l _j \b _j (\a _j - E)-\l _j (\a _j -E)^2/2}$. 
Let $a, b>0$ be two sufficiently small constants such that also $a/b$ is small, and put 
\begin{align*}
\O &= (E_0+1-2b, E_0+1+2b) + i(-2a,a], \\
W &= [E_0+1-b,E_0+1+b]+i(-a,a].
\end{align*}
It is easy to see that $|f_j(E)| \le e^{-C_0\l _j}$ for $E\in \O \setminus W$ for large values of $j$ and appropriate $a$ and $b$. By Theorem \ref{jkmm13alocal_trace_formula}
\begin{multline*}
\Tr [(\h f_j)(\ham{D}_2) - (\h f_j)(\ham{D}_1)] =\\
 \sum _{z_k\in \Res (\ham{D}_2)\cap W} f_j(z_k) - \sum _{z_k\in \Res (\ham{D}_1)\cap W} f_j(z_k) + e^{-C_0\l _j} \Oc (\hbar ^{-3}), 
\end{multline*}
uniformly in $k$. By combining the Bruneau-Robert trace formula \eqref{jkmm13arobert_trace_formula}, \eqref{jkmm13arobert_trace_simplified} and \eqref{jkmm13aomega_dE_equal_drho} we obtain
\begin{multline*}
C\hbar ^{-3} \int f_j (E)  \h (E) \o (E) \,dE \\ = \sum _{z_k\in \Res (\ham{D}_2)\cap W} f_j(z_k) - \sum _{z_k\in \Res (\ham{D}_1)\cap W} f_j(z_k) + e^{-C_0\l _j} \Oc (\hbar ^{-3}) + \Oc (\hbar ^{-2})
\end{multline*}
From this and \eqref{jkmm13aFBI_analytic_wave_front_sequence} we obtain
\begin{multline*}
C\hbar ^{-3}e^{-\ve _j \l _j} \le \Big |\sum _{z_k\in \Res (\ham{D}_2)\cap W} f_j(z_k) - \sum _{z_k\in \Res (\ham{D}_1)\cap W} f_j(z_k)\Big | \\ +  \Oc (\hbar ^{-3})e^{-C_0\l _j} + \Oc (\hbar ^{-2})
\end{multline*}
Combined with \eqref{jkmm13arobert_trace_formula}, \eqref{jkmm13aomega_dE_equal_drho} and \eqref{jkmm13aFBI_analytic_wave_front_sequence} this gives
\begin{multline*}
 \Big | \sum _{z_k\in \Res (\ham{D}_2)\cap W} f_j(z_k) - \sum _{z_k\in \Res (\ham{D}_1)\cap W} f_j(z_k) \Big | \\
 \ge C\hbar ^{-3}(e^{-\ve _j \l _j } - \Oc(1)e^{-C_0\l _j})\hbar ^{-3} + \Oc (\hbar ^{-2}) \ge C \hbar ^{-3} 
\end{multline*}
for some $C>0$, where the last inequality follows by fixing a sufficiently large $j$ and then taking $\hbar $ small enough. Since $|f_j|$ is bounded on $W$ the result follows. 
\end{proof}
Theorem \ref{jkmm13alower_bound_cor} now follows from Theorem \ref{jkmm13alower_bound_thm} by constructing the potential $v_1$ so that it produces no resonances. To achieve this we follow the argument outlined in \cite{sjostrand97_2}. By multiplying $v_2$ by a smooth cut-off function we arrange so that it equals 0 in some large ball $B(0,R)$ and is small in its complement. Then we follow this with an appropriate regularization so that complex distortion with $R_0=0$ can be done.  
\begin{proof}[Proof of Theorem \ref{jkmm13alower_bound_cor}]
Let 
$$
K(y) = C_0e^{-y^2/2} \quad \text{where }C_0= \big (\int \limits_{\R ^3}e^{-y^2/2}\, dy \big)^{-1}. 
$$
Put $K_{R} (y) = \l  ^{-3} K (\l ^{-1} y)$ where $\l = \l _R(x) = \langle R^{-1} x \rangle ^{-4}$ where $R\gg 1$ will be specified below. Take $\h \in \ccs (\R )$ which equals 1 for $|x|< 1$ and 0 for $|x|>2$. We define 
$$
v_1(x) = \int \limits_{\R ^3}K_{R } (x-y) \big ( 1 - \h (R^{-1}y) \big ) v_2 (y) \, dy.
$$ 
We see that $v_1$ extends to a holomorphic function in the domain $\{ |\im z| < \ve \langle \re z \rangle , |\re z|\ge 0 \}$  for $\ve  < 1$. Moreover
\begin{align*}
|v_1(x)| \le \int \limits _{\R ^3} K_R(x-y) |v_2(y)|\,dy = C_0\int \limits _{\R ^3} e^{-w^2/2}|v_2(x + \l w)|\,dw.
\end{align*}
Then Assumption ($\Ab _\d $) together with Peetre's inequality in the form
$$
\langle x + \l w\rangle ^{-\d } \le 2^{\d /2} \langle x \rangle ^{-\d } \langle \l w \rangle ^\d \le 2^{\d /2} \langle x \rangle ^{-\d } \langle w \rangle ^\d   
$$ 
implies that
$$
|v_1(x)| \le  C  \langle x \rangle ^{-\d } \int \limits _{\R ^3}e^{-w^2/2}  \langle w \rangle ^\d \,dw  \le C  \langle x \rangle ^{-\d }
$$
for some $\d >0$. Next we find that 
\begin{align*}
&|v_2(x) - v_1(x)| = \big |\int \limits _{\R ^3} K_R(x-y) \Big( v_2(x) - (1 - \h (R^{-1}y))v_2(y)\Big )\, dy \big | \\
&\le \int \limits_{\R ^3} K_R(x-y)|v_2(x) - v_2(y)|\, dy + \int \limits_{|y|<2R}K_R (x-y)\h (R^{-1}y) |v_2(y)| \, dy.
\end{align*}
For the first term we have, by virtue of Assumption ($\Ab _\d $), 
$$
\int \limits_{\R ^3} K_R(x-y)|v_2(x) - v_2(y)|\, dy \le C \int \limits_{\R ^3} K_R(x-y) |x-y|\, dy 
$$
and by making the change of coordinates $y=x+\l w$ we readily see that this can be bounded from above by $C\l $ for some constant $C$, independent of $x$ and $R$. The second term is clearly bounded and for $|x|>4R$ we have $|x-y|\ge |x|/2$ so that it can be bounded from above by 
$$
\lambda ^{-3} e^{-|x|^2/(8\l ^2)}\int \limits _{|y|<2R} |v_2(y)|\, dy \le C\langle x \rangle ^{-\d } 
$$
for any $\d >0$. This shows that 
$$
|v_2(x) - v_1(x)| \le C\langle x \rangle ^{-\d } \qquad \text{for }\d =4,
$$
and thus Assumption ($\Ab _\d $) is fulfilled with $R_0=0$. \\
\\
Finally, we point out that given any $\ve _0>0$ we can construct $v_1$ above so that $|v_1(x)|\le \ve _0$ for all $x\in \R ^3$. Indeed, choose $R>0$ so large that $|v_2(y)|\le \ve _0$ for $|y|>R$. Then   
$$
|v_1(x)| \le \int \limits _{|y|>R} K_R(x-y) |v_2(y)| \, dy \le \ve _0.
$$
We next show that the ``small'' potential $v_1$ constructed above cannot generate any resonances near $E_0+1$. \\
\\
If $z_0=E_0+1+\ve _0$ is a resonance close to $E_0+1$ we can compute $\ham{D}_\th $ explicitly (see \cite{khochman07}) and find $u\in L^2(\R ^3; \C ^4)$ with $\| u \| = 1$ such that 
$$
-\frac{1}{1+\th } i\hbar \sum _{j=1}^3 \a _j \partial _j u + \b u + (V_1\circ \f _\th ) u = (E_0 +1+ \ve _0)u
$$ 
which can be rewritten as 
$$
\| \ham {D}_0u - (E_0+1)u \| = \Oc (|\th |) + \Oc (|\ve _0|) + \Oc (\sup |V_1\circ \f _{\th }|)
$$
Since the norm appearing on the left hand side is independent of the quantities on the right hand side, and since these quantities can be made arbitrarily small, we see that $\ham {D} _0u = (E_0+1)u$ which is a contradiction. \\
\\
To prove the theorem it suffices that we construct $v_1$ as above such that the bound $\sup _{x\in \R ^3}|v_1(x)| < E_0/2$ holds. 
Then we  have $(E_0,1)\in \wfa(\n _{+,2})$ because $E_0 = \sup _{x\in \R ^3}v_2(x)$ is the right end point of $\supp (\n _{+,2})$.  
Since $d/dE$ is an elliptic analytic operator it follows that $(E_0,1)\in \wfa(\m _{+,2})$. Consequently, since $\m _{+,1}(E)=0$ for 
$E\ge E_0/2$  we infer from \eqref{jkmm13amu_restricted} that $(E_0,1)\in \wfa(\m )$ and the proof of  
Theorem~\ref{jkmm13alower_bound_thm} applies.    
\end{proof}

\textbf{Acknowledgement:} The first author is grateful to Johannes Sj\"ostrand for an illuminating discussion and the second author thanks Johannes Sj\"{o}strand for introducing him 
to semiclassical analysis and resonances at University of Gothenburg and Chalmers University of Technology during the spring terms of 2001-2002.

\end{document}